\DeclareMathAlphabet{\mathpzc}{OT1}{pzc}{m}{it}
\numberwithin{equation}{section}
\begin{document}

\theoremstyle{plain}

\newtheorem{theorem}{Theorem}[section]
\newtheorem{lemma}[theorem]{Lemma}
\newtheorem{example}[theorem]{Example}
\newtheorem{proposition}[theorem]{Proposition}
\newtheorem{corollary}[theorem]{Corollary}
\newtheorem{definition}[theorem]{Definition}
\newtheorem{Ass}[theorem]{Assumption}
\newtheorem{condition}[theorem]{Condition}
\theoremstyle{definition}
\newtheorem{remark}[theorem]{Remark}
\newtheorem{SA}[theorem]{Standing Assumption}

\newcommand{\of}{[\hspace{-0.06cm}[}
\newcommand{\gs}{]\hspace{-0.06cm}]}

\newcommand\llambda{{\mathchoice
		{\lambda\mkern-4.5mu{\raisebox{.4ex}{\scriptsize$\backslash$}}}
		{\lambda\mkern-4.83mu{\raisebox{.4ex}{\scriptsize$\backslash$}}}
		{\lambda\mkern-4.5mu{\raisebox{.2ex}{\footnotesize$\scriptscriptstyle\backslash$}}}
		{\lambda\mkern-5.0mu{\raisebox{.2ex}{\tiny$\scriptscriptstyle\backslash$}}}}}

\newcommand{\1}{\mathds{1}}

\newcommand{\F}{\mathbf{F}}
\newcommand{\G}{\mathbf{G}}

\newcommand{\B}{\mathbf{B}}

\newcommand{\M}{\mathcal{M}}

\newcommand{\la}{\langle}
\newcommand{\ra}{\rangle}

\newcommand{\lle}{\langle\hspace{-0.085cm}\langle}
\newcommand{\rre}{\rangle\hspace{-0.085cm}\rangle}
\newcommand{\blle}{\Big\langle\hspace{-0.155cm}\Big\langle}
\newcommand{\brre}{\Big\rangle\hspace{-0.155cm}\Big\rangle}

\newcommand{\X}{\mathsf{X}}

\newcommand{\tr}{\operatorname{tr}}
\newcommand{\N}{{\mathbb{N}}}
\newcommand{\cadlag}{c\`adl\`ag }
\newcommand{\on}{\operatorname}
\newcommand{\oP}{\overline{P}}
\newcommand{\oO}{\mathcal{O}}
\newcommand{\D}{D(\mathbb{R}_+; \mathbb{R})}

\renewcommand{\epsilon}{\varepsilon}

\newcommand{\fPs}{\mathfrak{P}_{\textup{sem}}}
\newcommand{\fPas}{\mathfrak{P}^{\textup{ac}}_{\textup{sem}}}
\newcommand{\rrarrow}{\twoheadrightarrow}
\newcommand{\cC}{\mathcal{C}}
\newcommand{\cH}{\mathcal{H}}
\newcommand{\cD}{\mathcal{D}}
\newcommand{\cE}{\mathcal{E}}
\newcommand{\cR}{\mathcal{R}}
\newcommand{\cQ}{\mathcal{Q}}
\newcommand{\cF}{\mathcal{F}}
\newcommand{\bth}{\overset{\leftarrow}\theta}
\renewcommand{\th}{\theta}

\newcommand{\bR}{\mathbb{R}}
\newcommand{\nnabla}{\nabla}
\newcommand{\f}{\mathfrak{f}}
\newcommand{\g}{\mathfrak{g}}
\newcommand{\oconv}{\overline{\operatorname{conv}}\hspace{0.1cm}}
\newcommand{\usa}{\on{usa}}
\newcommand{\usc}{\textit{USC}}
\newcommand{\C}{\mathsf{C}}
\newcommand{\ou}{\overline{u}}
\newcommand{\ua}{\underline{a}}
\newcommand{\uu}{\underline{u}}
\newcommand{\cK}{\mathcal{K}}

\renewcommand{\emptyset}{\varnothing}

\makeatletter
\@namedef{subjclassname@2020}{%
	\textup{2020} Mathematics Subject Classification}
\makeatother

 \title[]{A Class of Multidimensional Nonlinear Diffusions \\ with the Feller Property}
\author[D. Criens]{David Criens}
\author[L. Niemann]{Lars Niemann}
\address{Albert-Ludwigs University of Freiburg, Ernst-Zermelo-Str. 1, 79104 Freiburg, Germany}
\email{david.criens@stochastik.uni-freiburg.de}
\email{lars.niemann@stochastik.uni-freiburg.de}

\keywords{
nonlinear diffusion; nonlinear Markov processes; sublinear semigroup; sublinear expectation; nonlinear expectation; partial differential equation; viscosity solution; semimartingale characteristics; Knightian uncertainty}

\subjclass[2020]{47H20, 49L25, 60G53, 60G65, 60J60}

\thanks{DC acknowledges financial support from the DFG project SCHM 2160/15-1 and LN acknowledges financial support from the DFG project SCHM 2160/13-1.}
\date{\today}

\maketitle

\begin{abstract}
In this note we consider a family of nonlinear (conditional) expectations that can be understood as a multidimensional diffusion with uncertain drift and certain volatility. Here, the drift is prescribed by a set-valued function that depends on time and path in a Markovian way.
We establish the Feller property for the associated sublinear Markovian semigroup and we observe a smoothing effect as our framework carries enough randomness. 
Furthermore, we link the corresponding value function to a semilinear Kolmogorov equation.
\end{abstract}

\section{Introduction}
A \emph{nonlinear multidimensional diffusion}, or \emph{nonlinear multidimensional continuous Markov process}, is a family of sublinear expectations \( \{\cE^x \colon x \in \bR^d\} \) on the Wiener space \( C(\bR_+; \bR^d) \) with
\( \cE^x \circ X_0^{-1} = \delta_x \) for each \( x \in \bR^d \) such that the Markov property
\begin{equation} \label{eq: markov property}
\cE^x(\cE^{X_t}(\psi (X_s))) = \cE^x(\psi (X_{t+s}) ), \quad x \in \bR^d, \ s,t \in \bR_+,
\end{equation}
holds. Here, \(\psi\) runs through a collection of suitable test functions and \( X \) denotes the canonical process on \( C(\bR_+; \bR^d) \).
Building upon the seminal work of Peng \cite{peng2007g, peng2008multi} on the \(G\)-Brownian motion, nonlinear Markov processes have recently been studied from the perspective of processes under uncertainty, see \cite{fadina2019affine, hu2021g, K19, neufeld2017nonlinear}.
Using the techniques from \cite{NVH}, a general framework for constructing nonlinear Markov processes was developed in \cite{hol16}.
To be more precise, for given \( x \in \bR\), the sublinear expectation
\( \cE^x \) has the form
\( \cE^x = \sup_{P \in \cR(x)} E^P \)
with a collection \( \cR(x) \) of semimartingale laws \(P\) on the path space, with initial distribution \( \delta_x \) and absolutely continuous semimartingale characteristics \((B^{P}, C^{P})\), where the differential characteristics \((dB^{P} /d\llambda, dC^{P}/d\llambda)\) are prescribed in a Markovian way.

As in the theory of (linear) Markov processes, there is a strong link to semigroups. Indeed, the Markov property \eqref{eq: markov property} ensures the semigroup property \( T_t T_s = T_{s+t}, \ s,t \in \bR_+ \), where the sublinear operators \( T_t, \ t \in \bR_+, \) are defined by 
\begin{equation} \label{eq: def semigroup}
    T_t(\psi)(x) := \cE^x(\psi(X_t)) = \sup_{P \in \cR(x)} E^P \big[ \psi(X_t) \big]
\end{equation}
for suitable functions \(\psi\).
Using the general theory of \cite{ElKa15, NVH}, the operators
\(T_t, t \in \mathbb{R}_+\), are well-defined on the cone of upper semianalytic functions.

In our paper \cite{CN22b}, for a one-dimensional setting, we discovered a novel type of smoothing effect of the semigroup \((T_t)_{t \in \bR_+}\) under a weak ellipticity assumption. Namely, we proved that \((T_t)_{t \in \bR_+}\) has the so-called \emph{strong \(\usc_b\)--Feller property}, i.e., for every \(t > 0\), the operator \(T_t\) maps bounded upper semicontinuous functions to bounded continuous functions. 
Further, under a uniform ellipticity condition, we even established the {\em uniform} strong \(\usc_b\)--Feller property, that shows that each \(T_t\), for times \(t > 0\), maps bounded upper semicontinuous functions to bounded uniformly continuous functions. 
To the best of our knowledge, smoothing effects of these specific forms were not reported before. Related, but generally different, smoothing effects are known for viscosity solutions to parabolic Hamilton--Jacobi--Bellman (or more general nonlinear) PDEs, see, e.g., \cite{Cra02,kry18, kry17} and the references therein.

In this note, we establish the uniform strong \(\usc_b\)--Feller property for a class of {\em multidimensional} diffusions with \emph{uncertain} drift \(b \colon F \times \bR^d \to \bR^d\) and \emph{certain} volatility \(a \colon \bR^d \to \mathbb{S}^{d}_+\). 
Furthermore, leaning on theory from \cite{CN22,CN22b,hol16}, the (uniform) strong \(\usc_b\)--Feller property allows us, for \(\psi \in C_b(\bR^d; \bR)\), to identify the so-called value function \((t,x) \mapsto T_t(\psi)(x)\)
as a bounded viscosity solution to the nonlinear Kolmogorov type PDE
\begin{equation} \label{eq: intro PDE}
\begin{cases}   
\partial_t u (t, x) - G (x, u(t,\cdot \,)) = 0, & \text{for } (t, x) \in \bR_+ \times \mathbb{R}^d, \\
u (0, x) = \psi (x), & \text{for } x \in \bR^d,
\end{cases}
\end{equation}
where
\begin{align*}
    G(x, \phi) := \sup_{f \in F} \Big\{ \langle b (f, x), \nabla \phi (x) \rangle \Big\}
    + \tfrac{1}{2} \on{tr} \big[a (x) \nabla^2 \phi (x) \big].
\end{align*}
Under additional Lipschitz conditions, the value function is even the unique bounded viscosity solution to \eqref{eq: intro PDE}. Thanks to results from \cite{hol16,K19,K21}, this uniqueness result transfers to the level of semigroups. Namely, it shows that \((T_t)_{t \in \bR_+}\) is the only jointly continuous sublinear semigroup on \(C_b (\bR^d; \bR)\) whose pointwise generator coincides with \(G\) on the set \(C^\infty_c (\bR^d; \bR)\) of smooth functions with compact support.
For sublinear convolution semigroups, such a stochastic representation has been obtained in \cite{K21}. For a statement on the level of nonlinear Markov processes with jumps, we refer to our paper \cite{CN23a} that was finished after the current paper has been submitted.

Sublinear semigroups can also be constructed by analytic methods, see \cite{denk2020semigroup, hol16, NR}. A general approach leading to the so-called {\em Nisio semigroup} and a corresponding viscosity theory was recently established in the paper \cite{NR}. The framework from \cite{NR} allows for general state spaces, and provides conditions for Feller properties on spaces of weighted continuous functions. If the weight function is vanishing at infinity, this includes the \(C_b\)--Feller property, i.e., the semigroup is a self-map on the space of bounded continuous functions. Further, in case the weight function is bounded from below and the Nisio semigroup is continuous from above (in a suitable sense), the \(C_b\)--Feller property can be derived, too.  Specific examples for nonlinear Markov processes whose associated Nisio semigroups entail the \(C_b\)--Feller property were discussed in \cite[Section 6.3]{NR}.
For the case of convolution semigroups, corresponding to the L\'evy framework from \cite{denk2020semigroup, neufeld2017nonlinear}, it has been shown in \cite{K21} that \((T_t)_{t \in \bR_+}\) coincides with the Nisio semigroup. A similar result for certain one-dimensional nonlinear diffusions is given in the update of our paper~\cite{CN22b}.
It appears to us that the analytic treatment provides currently no access to the (uniform) strong \(\usc_b\)--Feller property.

\section{Main Result}
\subsection{The Setting}\label{subsec: setting}
Fix a dimension \(d \in \mathbb{N}\) and define $\Omega$ to be the space of continuous functions \(\mathbb{R}_+ \to \mathbb{R}^d\) endowed with the local uniform topology. 
The canonical process on $\Omega$ is denoted by \(X\), i.e., \(X_t (\omega) = \omega (t)\) for \(\omega \in \Omega\) and \(t \in \mathbb{R}_+\). 
It is well-known that \(\mathcal{F} := \mathcal{B}(\Omega) = \sigma (X_t, t \in \bR_+)\).
We define $\F := (\mathcal{F}_t)_{t \in \bR_+}$ as the canonical filtration generated by $X$, i.e., \(\mathcal{F}_t := \sigma (X_s, s \in [0, t])\) for \(t \in \mathbb{R}_+\). 
The set of probability measures on \((\Omega, \mathcal{F})\) is denoted by \(\mathfrak{P}(\Omega)\) and endowed with the usual topology of convergence in distribution.
We denote the space of symmetric positive semidefinite real-valued \(d\times d\) matrices by \(\mathbb{S}^d_{+}\).
Let \(F\) be a metrizable space and let \(b \colon F \times \mathbb{R}^d \to \mathbb{R}^d\) and \(a \colon \mathbb{R}^d \to \mathbb{S}^d_{+}\) be two Borel functions.

\begin{condition} \label{SA: bounded and elliptic}
\quad 
\begin{enumerate}
\item[\textup{(i)}] \(F\) is compact.
    \item[\textup{(ii)}] \(b\) and \(a\) are continuous.
\item[\textup{(iii)}]
There exists a constant \(\C > 0\) such that, for all \(f \in F\) and \(x, \xi \in \mathbb{R}^d\),
\[
\| b( f, x ) \| \leq \C, \qquad \frac{\|\xi\|^2}{\C} \leq \langle \xi, a(x) \xi \rangle \leq \C \|\xi\|^2.
\]
\end{enumerate}
\end{condition}

\begin{remark}
In the this note, we will always work under Condition~\ref{SA: bounded and elliptic}.
While it is possible so substantially weaken part (iii), we assume it for the sake of clarity. 
\end{remark}
We define the correspondence, i.e., the set-valued map, \(\Theta \colon \bR^d \twoheadrightarrow \mathbb{R}^d \times \mathbb{S}^d_{+}\) by
\[
\Theta (x) := \big\{(b (f, x), a (x)) \colon f \in F \big\} \subset \mathbb{R}^d \times \mathbb{S}^d_{+}.
\]

\begin{remark}
Thanks to \cite[Lemma 2.9]{CN22}, under Condition~\ref{SA: bounded and elliptic}, the graph of \(\Theta\) is measurable.
\end{remark}

We denote the set of laws of continuous semimartingales by \(\fPs \subset \mathfrak{P}(\Omega)\).
For \(P \in \fPs\), we denote the semimartingale characteristics of the coordinate process \(X\) by \((B^P, C^P)\), and 
we set 
\[
\fPas  := \big\{ P \in \fPs  \colon P\text{-a.s. } (B^P, C^P) \ll \llambda \big\},
\]
where \(\llambda\) denotes the Lebesgue measure.
For \( x \in \bR^d \), we further define
\[
\cR (x) := \big\{ P \in \fPas \colon P \circ X_0^{-1} = \delta_{x},\ (\llambda \otimes P)\text{-a.e. } (dB^{P} /d\llambda, dC^{P}/d\llambda) \in \Theta(X)   \big\}.
\]

\begin{remark}
    By virtue of \cite[Lemma 2.10]{CN22}, under Condition~\ref{SA: bounded and elliptic}, the set \(\cR(x)\) is non-empty for every \(x \in \bR^d\).
\end{remark}


\subsection{Nonlinear Diffusions and Sublinear Semigroups} \label{subsec: markovian semigroups}

For each \( x \in \bR^d \), we define the sublinear operator \( \cE^x \) on the convex cone of upper semianalytic functions  \(\psi \colon \Omega \to \bR \) by
\[ \cE^x(\psi) := \sup_{P \in \cR(x)} E^P \big[ \psi \big]. \] 
For every \( x \in \bR^d \), we have by construction that \( \cE^x(\psi(X_0)) = \psi(x) \) for every bounded upper semianalytic function \( \psi \colon \bR^d \to \bR \).

The next proposition confirms that the family \(\{\cE^x \colon x \in \bR^d\}\)
is a nonlinear multidimensional diffusion. It provides the so-called nonlinear Markov property of the family
\( \{ \cE^x \colon x \in \bR^d\}\), cf. \cite[Lemma 4.32]{hol16} and \cite[Proposition 2.8]{CN22b}. We omit a detailed proof.

\begin{proposition}  [Nonlinear Markov Property] \label{prop: markov property}
Suppose that Condition~\ref{SA: bounded and elliptic} holds.
For \(t \in \mathbb{R}_+\),  denote the shift operator \(\theta_t \colon \Omega \to \Omega\) by \(\theta_t (\omega) := \omega(\,\cdot + t)\).
Then, for every upper semianalytic function \( \psi \colon \Omega \to [- \infty, \infty] \), the equality
\[
\cE^x( \psi \circ \theta_t) = \cE^x ( \cE^{X_t} (\psi))
\]
holds for every \((t, x) \in \bR_+ \times \bR^d\).
\end{proposition}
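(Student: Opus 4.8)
The plan is to establish the two inequalities $\cE^x(\psi \circ \theta_t) \le \cE^x(\cE^{X_t}(\psi))$ and $\cE^x(\psi \circ \theta_t) \ge \cE^x(\cE^{X_t}(\psi))$ separately, following the now-standard machinery for nonlinear Markov properties of semimartingale families (as in \cite[Lemma 4.32]{hol16} and \cite[Proposition 2.8]{CN22b}), whose backbone is the measurable selection, concatenation (pasting), and disintegration theory for sets of semimartingale laws developed in \cite{NVH}. The first structural step is to verify that the family $x \mapsto \cR(x)$ has the properties needed to run this machinery: namely that each $\cR(x)$ is an analytic subset of $\fPas$, that the map is measurable in the appropriate sense, and crucially that the set $\{\cR(x) : x \in \bR^d\}$ is \emph{stable under conditioning} and \emph{stable under pasting}. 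Stability under conditioning means that for $P \in \cR(x)$ and an $\F$-stopping time $\tau$, the regular conditional probability $P^{\omega}_\tau$ (shifted by $\theta_\tau$) lies in $\cR(X_\tau(\omega))$ for $P$-a.e.\ $\omega$; stability under pasting is the converse operation of gluing a family of laws $(Q^\omega)_\omega$ with $Q^\omega \in \cR(X_t(\omega))$ onto an element of $\cR(x)$ at time $t$ and obtaining an element of $\cR(x)$. Both follow from the Markovian, time-homogeneous description of $\cR(x)$ through the graph of $\Theta$: the defining constraint $(dB^P/d\llambda, dC^P/d\llambda) \in \Theta(X)$ is a pathwise, local-in-time condition on the differential characteristics, and differential characteristics transform correctly under conditioning and concatenation of semimartingale laws (this is exactly the content of the relevant lemmas in \cite{NVH}; the measurability of the graph of $\Theta$, noted in the excerpt via \cite[Lemma 2.9]{CN22}, is what makes the selection arguments go through).

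For the inequality ``$\le$'': take any $P \in \cR(x)$. By stability under conditioning, for $P$-a.e.\ $\omega$ the shifted conditional law $P^\omega := (P(\,\cdot \mid \cF_t))^\omega \circ \theta_t^{-1}$ belongs to $\cR(X_t(\omega))$, hence $E^{P^\omega}[\psi] \le \cE^{X_t(\omega)}(\psi)$. Integrating and using the tower property of conditional expectation, $E^P[\psi \circ \theta_t] = E^P\big[ E^{P}[\psi \circ \theta_t \mid \cF_t] \big] = E^P\big[ E^{P^\omega}[\psi] \big] \le E^P\big[ \cE^{X_t}(\psi) \big] \le \cE^x(\cE^{X_t}(\psi))$; taking the supremum over $P \in \cR(x)$ gives the claim. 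A technical point here is that $\omega \mapsto \cE^{X_t(\omega)}(\psi)$ must be upper semianalytic so that the outer expectation $\cE^x$ of it makes sense and the last inequality is legitimate — this is part of the package from \cite{NVH} (upper semianalyticity of the value function is preserved under the relevant operations), and it is why one works on the cone of upper semianalytic functions throughout.

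For the inequality ``$\ge$'': fix $\epsilon > 0$. Using a measurable selection theorem (Jankov--von Neumann), choose an analytically measurable map $\omega \mapsto Q^\omega \in \cR(X_t(\omega))$ with $E^{Q^\omega}[\psi] \ge \cE^{X_t(\omega)}(\psi) - \epsilon$ (an $\epsilon$-optimizer), and separately choose $P_0 \in \cR(x)$ with $E^{P_0}\big[ \cE^{X_t}(\psi) \big] \ge \cE^x(\cE^{X_t}(\psi)) - \epsilon$. Paste $(Q^\omega)_\omega$ onto $P_0$ at time $t$ to obtain $P^\star \in \cR(x)$ (this is where stability under pasting is used). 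Then $E^{P^\star}[\psi \circ \theta_t] = E^{P_0}\big[ E^{Q^\omega}[\psi] \big] \ge E^{P_0}\big[ \cE^{X_t}(\psi) \big] - \epsilon \ge \cE^x(\cE^{X_t}(\psi)) - 2\epsilon$, so $\cE^x(\psi \circ \theta_t) \ge \cE^x(\cE^{X_t}(\psi)) - 2\epsilon$; letting $\epsilon \downarrow 0$ finishes the proof. The main obstacle — and the reason the authors say they ``omit a detailed proof'' — is entirely in the two stability properties and the attendant measurability/selection bookkeeping: one has to check carefully that conditioning and pasting of semimartingale laws interact correctly with the Markovian constraint on the differential characteristics, and that all the maps appearing ($\omega \mapsto P^\omega$, $\omega \mapsto Q^\omega$, $\omega \mapsto \cE^{X_t(\omega)}(\psi)$) have the right measurability. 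Since $\Theta$ here has the special product-type structure $\Theta(x) = \{(b(f,x), a(x)) : f \in F\}$ with $F$ compact and $a$ fixed, these verifications are in fact somewhat easier than in the fully general setting, but the argument is structurally identical to \cite[Lemma 4.32]{hol16}, which is why a reference suffices.
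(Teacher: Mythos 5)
Your outline is correct and matches the route the paper itself takes: the paper omits details precisely because the statement follows from the standard conditioning/pasting (dynamic programming) machinery of \cite{NVH} together with the measurability of the graph of \(\Theta\), exactly as carried out in \cite[Lemma 4.32]{hol16} and \cite[Proposition 2.8]{CN22b}, which is what you reproduce. The only minor bookkeeping point worth noting is that, since \(\psi\) may take values \(\pm\infty\), the \(\epsilon\)-optimal selection should be phrased with a truncation, e.g.\ choosing \(Q^\omega\) with \(E^{Q^\omega}[\psi] \geq (\cE^{X_t(\omega)}(\psi) - \epsilon) \wedge \epsilon^{-1}\), which is the standard remedy and does not change the argument.
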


\begin{definition}
Let \( \mathcal{H} \) be a convex cone of functions \( f \colon \bR^d \to \bR \) containing all constant functions.
A family of sublinear operators \( T_t \colon \mathcal{H} \to \mathcal{H}, \ t \in \bR_+,\) is called a \emph{sublinear Markovian semigroup} on \( \mathcal{H} \) if it satisfies the following properties:
\begin{enumerate}
    \item[\textup{(i)}] \( (T_t)_{t \in \bR_+} \) has the semigroup property, i.e., 
          \( T_s T_t = T_{s+t} \) for all \(s, t \in \bR_+ \) and
          \( T_0 = \on{id} \),
    
    \item[\textup{(ii)}] \( T_t \) is monotone for each \( t \in \bR_+\), i.e., 
    \( f, g \in \mathcal{H} \) with \( f \leq g \) implies \(T_t (f) \leq T_t (g) \),
    
    \item[\textup{(iii)}] \( T_t \) preserves constants for each  \( t \in \bR_+\), i.e.,
    \( T_t(c) = c \) for each \( c \in \bR  \).
\end{enumerate}
\end{definition}

The following proposition should be compared to \cite[Remark 4.33]{hol16} and \cite[Proposition 2.8]{CN22b}. 
For brevity, we omit a detailed proof. 

\begin{proposition}[Semigroup Property]
Suppose that Condition~\ref{SA: bounded and elliptic} holds.
The family of operators \( (T_t)_{t \in \bR_+} \) given by
\begin{align} \label{eq: def T} T_t ( \psi )(x) := \cE^x(\psi(X_t)), \quad t \in \bR_+, \ x \in \bR^d, \end{align}
defines a sublinear Markovian semigroup on the set of bounded upper semianalytic functions.
\end{proposition}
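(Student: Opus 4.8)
The plan is to check, in turn, that each \(T_t\) is a well-defined self-map of the cone \(\mathcal{H}\) of bounded upper semianalytic functions \(\bR^d \to \bR\), that \((T_t)_{t \in \bR_+}\) has the semigroup property, and that each \(T_t\) is monotone, preserves constants, and is sublinear. (That \(\mathcal{H}\) is itself a convex cone containing the constants is standard, since sums and nonnegative multiples of upper semianalytic functions are upper semianalytic.) The monotonicity, constant-preservation and sublinearity of \(T_t\), as well as boundedness of \(T_t(\psi)\), will be essentially immediate from the representation \(\cE^x = \sup_{P \in \cR(x)} E^P\): for \(\psi \leq \phi\) in \(\mathcal{H}\) one has \(\psi(X_t) \leq \phi(X_t)\) on \(\Omega\), hence \(E^P[\psi(X_t)] \leq E^P[\phi(X_t)]\) for every \(P\) and \(T_t(\psi) \leq T_t(\phi)\) after taking suprema; \(T_t(c)(x) = \cE^x(c) = \sup_{P \in \cR(x)} E^P[c] = c\) because \(\cR(x) \neq \emptyset\) (the remark after the definition of \(\cR(x)\), which rests on \cite[Lemma 2.10]{CN22}); and the same non-emptiness together with monotonicity of \(\cE^x\) gives \(\|T_t(\psi)\|_\infty \leq \|\psi\|_\infty\).

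The part that needs genuine care — and the main obstacle — is showing that \(T_t(\psi)\) is upper semianalytic. First I would note that \(X_t \colon \Omega \to \bR^d\) is continuous, hence Borel, so \(\psi \circ X_t\) is again bounded and upper semianalytic and \(\cE^x(\psi(X_t))\) is well-defined. For the upper semianalyticity of \(x \mapsto T_t(\psi)(x) = \sup_{P \in \cR(x)} E^P[\psi(X_t)]\), I would invoke the descriptive set theory behind the construction of \(\cR\): the graph of \(\Theta\) is measurable (the remark based on \cite[Lemma 2.9]{CN22}), which the construction of \(\cR\) propagates to analyticity of the graph \(\{(x, P) : P \in \cR(x)\} \subset \bR^d \times \mathfrak{P}(\Omega)\), exactly as in \cite{NVH, hol16}; since, moreover, \(P \mapsto E^P[\psi(X_t)]\) is upper semianalytic, and analytic sets are closed under intersection and under projection, this partial supremum is upper semianalytic. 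Hence \(T_t(\psi) \in \mathcal{H}\). I expect this step to be routine given the machinery of \cite{CN22, hol16}, so I would keep it short and refer there.

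With well-definedness established, the semigroup property reduces to the nonlinear Markov property. Since every \(P \in \cR(x)\) satisfies \(P \circ X_0^{-1} = \delta_x\), we get \(T_0(\psi)(x) = \cE^x(\psi(X_0)) = \psi(x)\), that is, \(T_0 = \on{id}\). For \(s, t \in \bR_+\) and \(x \in \bR^d\), unfolding the definitions and using \((T_t \psi)(y) = \cE^y(\psi(X_t))\),
\[
T_s(T_t(\psi))(x) = \cE^x\big( (T_t\psi)(X_s) \big) = \cE^x\big( \cE^{X_s}(\psi(X_t)) \big),
\]
which is meaningful precisely because \(T_t(\psi)\) is upper semianalytic. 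Applying Proposition~\ref{prop: markov property} to the bounded upper semianalytic test function \(\psi \circ X_t\), and noting that \((\psi \circ X_t) \circ \theta_s = \psi \circ X_{t+s}\), the right-hand side equals \(\cE^x(\psi(X_{t+s})) = T_{s+t}(\psi)(x)\). Hence \(T_s T_t = T_{s+t}\), which finishes the plan; as indicated, everything is formal once the nonlinear Markov property is in hand, apart from the upper-semianalyticity argument, which is the only delicate point.
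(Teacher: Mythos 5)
Your argument is correct and is essentially the proof the paper has in mind: the paper omits the details and points to \cite[Remark 4.33]{hol16} and \cite[Proposition 2.8]{CN22b}, which proceed exactly as you do, obtaining well-definedness on the cone of bounded upper semianalytic functions from the analytic-graph/measurable-selection machinery of \cite{NVH, ElKa15, CN22}, the monotonicity, constant preservation and sublinearity directly from the representation \(\cE^x = \sup_{P \in \cR(x)} E^P\) together with \(\cR(x) \neq \emptyset\), and the semigroup law \(T_s T_t = T_{s+t}\) by applying the nonlinear Markov property (Proposition~\ref{prop: markov property}) to the test function \(\psi \circ X_t\) and using \((\psi \circ X_t)\circ\theta_s = \psi \circ X_{t+s}\). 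No gaps beyond the level of detail the paper itself delegates to the cited references.
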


\subsection{The Feller Property}

In the following theorem, which is the main result of this note, we show that \((T_t)_{t \in \mathbb{R}_+}\) is a nonlinear semigroup on the space \(C_b(\mathbb{R}^d; \mathbb{R})\) of bounded continuous functions from \(\mathbb{R}^d\) into \(\mathbb{R}\). In fact, we show a bit more, namely the existence of a so-called \emph{uniform strong Feller selection} (see Definition~\ref{def: USFF} below).

\begin{condition} \label{cond: convex}
The set \(
\{ b (f, x) \colon f \in F \} \subset \mathbb{R}^d
\) is convex for every \(x \in \mathbb{R}^d\).
\end{condition}
\begin{theorem} \label{thm: main}
Suppose that the Conditions~\ref{SA: bounded and elliptic} and \ref{cond: convex} hold.
Then, for every \(t > 0\) and any bounded upper semicontinuous function \(\psi \colon \mathbb{R}^d \to \mathbb{R}\), the map \(x \mapsto T_t (\psi)(x)\) is uniformly continuous. 
In particular, \((T_t)_{t \in \mathbb{R}_+}\) has the \(C_b\)--Feller property, i.e., 
\(T_t(C_b(\bR^d;\bR)) \subset C_b(\bR^d;\bR)\).
\end{theorem}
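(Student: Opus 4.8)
The plan is to establish the uniform continuity of $x \mapsto T_t(\psi)(x)$ for $t>0$ by a coupling/transport argument that exploits the ellipticity of $a$ to "move mass around" at a controlled cost. The key structural fact is that, thanks to Condition~\ref{cond: convex}, the admissible drifts form a convex compact set, so for any two starting points $x$ and $y$ we can try to couple a near-optimal measure $P \in \cR(x)$ with a measure $Q \in \cR(y)$ such that, under the coupling, the two canonical paths at time $t$ are close with high probability. Concretely, I would fix $\psi$ bounded upper semicontinuous, $\varepsilon>0$, and $t>0$; pick $P \in \cR(x)$ with $E^P[\psi(X_t)] \ge T_t(\psi)(x) - \varepsilon$; and then construct $Q \in \cR(y)$ together with a coupling measure on $\Omega \times \Omega$ under which $\|X_t - \widetilde X_t\|$ is small in probability, uniformly as $\|x-y\| \to 0$. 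Combined with upper semicontinuity of $\psi$ (which only gives one-sided control, hence one should symmetrize or argue via regularization, e.g. approximate $\psi$ from above by bounded Lipschitz functions and handle the USC case via an $\limsup$ argument), this yields $T_t(\psi)(y) \ge T_t(\psi)(x) - \varepsilon - o(1)$, and by symmetry the reverse, giving (uniform) continuity.

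The construction of the coupling is where the ellipticity lower bound $\langle \xi, a(x)\xi\rangle \ge \|\xi\|^2/\C$ enters decisively. The idea is a Girsanov-type change of drift: starting from $P \in \cR(x)$ with differential characteristics $(b(f_s,X_s), a(X_s))$, I would run a process started at $y$ with the \emph{same} volatility $a$ and a drift that combines an admissible component $b(\cdot, \cdot)$ with a correction term of the form $a(X_s)\,\gamma_s$ pushing the $y$-path toward the $x$-path; the point of ellipticity is that a bounded correction in the "$a$-direction" is always available, and by convexity of $\{b(f,x):f\in F\}$ one can absorb part of this correction into the admissible drift set, or at least bound the Radon--Nikodym density this perturbation costs. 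One then estimates the difference process $X_t - \widetilde X_t$ via Gr\"onwall using the Lipschitz-type continuity of $a$ and $b$ (here Condition~\ref{SA: bounded and elliptic}(ii) gives uniform continuity on compacts, and boundedness of the drift plus a moment estimate on the paths lets one reduce to compacts). An alternative, and perhaps cleaner, route is to invoke the one-dimensional strong Feller result and machinery of \cite{CN22b} componentwise or via the PDE, but since the multidimensional drift set need not be a product, the direct coupling seems more robust.

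The step I expect to be the main obstacle is controlling the coupling \emph{uniformly} in the base point, i.e. upgrading pointwise continuity to uniform continuity, and simultaneously ensuring the corrected law genuinely lies in $\cR(y)$ — that is, that its differential characteristics stay in $\Theta(\widetilde X)$ a.e. Convexity of the drift set is exactly what one needs to keep the drift admissible after averaging/correcting, but one must be careful that the volatility is evaluated at the correct (perturbed) path, so the natural construction produces a process whose volatility is $a(\widetilde X_s)$, not $a(X_s)$; reconciling this requires either a fixed-point/stability argument for the coupled SDE system or passing through the martingale-problem formulation and a measurable-selection step (as in \cite{CN22}, whose Lemmas~2.9 and~2.10 are already invoked in the excerpt). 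For the uniformity, boundedness of $b$ and uniform ellipticity give estimates with constants depending only on $\C$, $t$, and the modulus of continuity of $(a,b)$ on a large ball, and a uniform exit-time/tightness bound for the family $\{\cR(x): \|x\|\le R\}$ controls the contribution of paths that leave that ball; assembling these into a single modulus of continuity for $T_t(\psi)$ independent of $x,y$ is the technical heart of the argument. Once uniform continuity of $T_t(\psi)$ is in hand for bounded upper semicontinuous $\psi$, the $C_b$--Feller property is immediate, since a bounded continuous function is in particular bounded upper semicontinuous, so $T_t(\psi) \in C_b(\bR^d;\bR)$.
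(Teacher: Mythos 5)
There is a genuine gap: your coupling is of Wasserstein type (pathwise closeness of $X_t$ and $\widetilde X_t$), and such a coupling cannot produce the claimed smoothing from upper semicontinuous to (uniformly) continuous. With $\psi$ merely bounded upper semicontinuous, closeness in probability gives only one-sided estimates, and your fallback --- approximating $\psi$ from above by bounded Lipschitz functions and passing to the limit --- exhibits $T_t(\psi)$ only as a decreasing limit of continuous functions, hence yields upper semicontinuity, not continuity, unless the convergence were uniform (which nothing guarantees). What is needed is a strong-Feller-type statement (bounded Borel/USC functions mapped to continuous functions), and the paper obtains it in two steps that are absent from your plan: first, a strong Markov selection (Theorem~\ref{theo: strong Markov selection}; this is where Condition~\ref{cond: convex} actually enters), which replaces the supremum over the non-dominated family $\cR(x)$ by a single linear expectation $E^{P_{(0,x)}}[\psi(X_t)]$ along a measurable strong Markov family $\{P_{(s,x)}\}$ with $P_{(s,x)}\in\cK(s,x)$; second, a Girsanov drift-removal (Theorem~\ref{thm: Markov = strong Feller}, following \cite[Theorem 7.1.9]{SV}) identifying $P_{(s,x)}$, up to an exponential density $Z^{(s,x)}$ whose $L^2$-distance from $1$ is controlled by the constant $\C$ of Condition~\ref{SA: bounded and elliptic}, with the unique Stroock--Varadhan solution $Q_{(s,x)}$ of the driftless, uniformly elliptic martingale problem, whose uniform strong Feller property for bounded Borel data is classical. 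Your use of Girsanov is different (steering the $y$-path toward the $x$-path), and the step where you ``absorb'' the correction $a(X_s)\gamma_s$ into the admissible drift set via convexity is unjustified: there is no reason that $b(f,\cdot)+a(\cdot)\gamma$ lies in $\{b(f',\cdot)\colon f'\in F\}$, so the corrected law need not belong to $\cR(y)$.

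There are also quantitative obstructions. The Gr\"onwall/fixed-point stability estimates you invoke for the coupled system (and for reconciling the volatility $a(\widetilde X)$ with $a(X)$) require Lipschitz coefficients; Theorem~\ref{thm: main} assumes only continuity, boundedness and uniform ellipticity (Condition~\ref{SA: bounded and elliptic}), the Lipschitz hypothesis (Condition~\ref{cond: Lipschitz continuity}) being reserved for the uniqueness part of Theorem~\ref{thm: main viscosity}; a mere modulus of continuity on compacts does not close a Gr\"onwall loop. Finally, even granting a coupling for each nearby pair $x,y$ and each near-optimal $P\in\cR(x)$, you would still have to make the estimate uniform over the choice of $P$ inside the supremum, a point your plan does not address; the selection theorem disposes of this issue by fixing one measure per starting point before any continuity estimate is made. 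Your last observation is correct and matches the paper: once uniform continuity of $T_t(\psi)$ holds for bounded upper semicontinuous $\psi$, the $C_b$--Feller property is immediate.
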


\begin{remark}
Theorem \ref{thm: main} shows that \((T_t)_{t \in \mathbb{R}_+}\) has a weak version of the uniform strong Feller property (Definition~\ref{def: USFF} below), i.e., that regularity is gained through \(T_t\), with \(t > 0\), since bounded \emph{upper semicontinuous} functions are mapped to bounded \emph{uniformly continuous} functions. 
\end{remark}

For a {\em one}-dimensional continuous nonlinear framework with uncertain volatility, i.e., with \(F\)-dependent diffusion coefficient \(a\), a version of Theorem \ref{thm: main} was proved in our previous paper \cite{CN22b}.

\subsection{An Application to Semilinear PDEs}
For \((x,\phi) \in \times \mathbb{R}^d \times C^{2}(\mathbb{R}^d; \bR)\), we define the nonlinearity
\begin{align} \label{eq: G main text}
    G(x, \phi) := \sup_{f \in F} \Big\{ \langle b (f, x), \nabla \phi (x) \rangle \Big\}
    + \tfrac{1}{2} \on{tr} \big[ a (x) \nabla^2 \phi (x) \big].
\end{align}
Recall that a function \(u \colon \bR_+ \times \bR^d \to \mathbb{R}\) is said to be a \emph{weak sense viscosity subsolution} to the semilinear PDE
\begin{equation} \label{eq: PDE}
\begin{cases}   
\partial_t u (t, x) - G (x, u(t, \cdot \,)) = 0, & \text{for } (t, x) \in \bR_+ \times \mathbb{R}^d, \\
u (0, x) = \psi (x), & \text{for } x \in \bR^d,
\end{cases}
\end{equation}
where \(\psi  \in C_b(\mathbb{R}^d; \mathbb{R})\),
if the following two properties hold:
\begin{enumerate}
    \item[\textup{(a)}] \(u(0, \cdot\,) \leq \psi\);
\item[\textup{(b)}]
\(
\partial_t \phi (t, x) - G (x, \phi(t,\cdot\,)) \leq 0
\)
for all \(\phi \in  C^\infty_b(\bR_+ \times \bR^d; \bR)\) such that \(u \leq \phi\) and \(\phi (t, x) = u(t, x)\) for some \((t, x) \in (0,\infty) \times \bR^d \). 
\end{enumerate}
A \emph{weak sense viscosity supersolution} is obtained by reversing the inequalities. Further, \(u\) is called \emph{weak sense viscosity solution} if it is a weak sense viscosity sub- and supersolution. 
Additionally,  \( u \) is called \emph{viscosity subsolution} if it is both, a weak sense viscosity subsolution, and upper semicontinuous. The notions of viscosity supersolution and viscosity solution are defined accordingly.

Similar as in our previous paper \cite{CN22}, using the Conditions~\ref{SA: bounded and elliptic} and \ref{cond: convex},
one can prove that the so-called value function 
\[ v (t, x) := T_t(\psi)(x), \quad (t,x) \in \bR_+ \times \bR^d,\]
is a weak-sense viscosity solution to \eqref{eq: PDE}. The uniform strong \(\usc_b\)--Feller property from Theorem~\ref{thm: main} yields additional regularity of the semigroup which can be used to show that \(v\) is even a viscosity solution in the classical sense. Under Lipschitz conditions on \(b\) and \(a\), we can even deduce a uniqueness statement.

\begin{condition}[Lipschitz Continuity in Space] \label{cond: Lipschitz continuity} 
    There exists a decomposition \(a = \sigma \sigma^*\) and a constant \(\C > 0\) such that 
    \[
    \|b(f, x) - b(f, y)\| + \|\sigma (x) - \sigma (y)\| \leq \C \|x - y\|, 
    \]
    for all \(f \in F\) and \(x, y \in \mathbb{R}^d\).
\end{condition}

\begin{theorem} \label{thm: main viscosity}
Suppose that the Conditions~\ref{SA: bounded and elliptic} and \ref{cond: convex} hold. Then, the value function \(v\) is a viscosity solution to the nonlinear PDE \eqref{eq: PDE}. If, in addition, Condition~\ref{cond: Lipschitz continuity} holds, then \(v\) is the unique bounded viscosity solution.
\end{theorem}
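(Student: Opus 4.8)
The plan is to establish the two assertions in turn. For the first, I would show by a dynamic programming argument that $v$ is a weak-sense viscosity solution of \eqref{eq: PDE}, and then upgrade this to a viscosity solution in the classical sense using the regularity provided by \Cref{thm: main}; for the second, I would prove a comparison principle under the additional Lipschitz hypothesis.

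\emph{Step 1: $v$ is a weak-sense viscosity solution.} The nonlinear Markov property (\Cref{prop: markov property}) together with the semigroup property yields, for $0 < h < t_0$, the dynamic programming identity $v(t_0, x) = T_h(v(t_0 - h, \cdot\,))(x) = \sup_{P \in \cR(x)} E^P[v(t_0 - h, X_h)]$. Let $\phi \in C^\infty_b(\bR_+ \times \bR^d; \bR)$ touch $v$ from above at $(t_0, x_0) \in (0, \infty) \times \bR^d$. Applying Itô's formula to $s \mapsto \phi(t_0 - h, X_s)$ on $[0, h]$ under $P \in \cR(x_0)$ — the continuous local martingale part has quadratic variation bounded by $\C h \|\nabla \phi\|_\infty^2$ and is hence a true martingale — and using $v \le \phi$ together with $(dB^P/d\llambda, dC^P/d\llambda) \in \Theta(X)$ $(\llambda \otimes P)$-a.e.\ and the definition of $G$, I obtain
\[
\frac{\phi(t_0, x_0) - \phi(t_0 - h, x_0)}{h} \;\le\; \sup_{P \in \cR(x_0)} \frac1h\, E^P\Big[ \int_0^h G\big( X_s, \phi(t_0 - h, \cdot\,) \big)\, ds \Big].
\]
Letting $h \downarrow 0$ and invoking the $P$-uniform bound $\sup_{P \in \cR(x_0)} E^P[\|X_s - x_0\|^2] \le C_1(s + s^2)$ (immediate from the boundedness of the characteristics in \Cref{SA: bounded and elliptic}), the uniform-in-$h$ boundedness of $G(\cdot\,, \phi(t_0 - h, \cdot\,))$, and the continuity of $b$, $a$ together with the compactness of $F$, yields $\partial_t \phi(t_0, x_0) - G(x_0, \phi(t_0, \cdot\,)) \le 0$. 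For the supersolution inequality I would choose a Borel selection $\bar f \colon \bR^d \to F$ realising the supremum defining $G(\cdot\,, \phi(t_0 - h, \cdot\,))$ and — as in \cite{CN22} — a law $P \in \cR(x_0)$ with drift $b(\bar f(X_\cdot), X_\cdot)$, and run the same computation with the inequalities reversed. Since $v(0, \cdot\,) = \psi$ by construction, $v$ is a weak-sense viscosity solution of \eqref{eq: PDE}, precisely along the lines of \cite{CN22}.

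\emph{Step 2: $v$ is a viscosity solution.} It remains to show $v$ is continuous on $\bR_+ \times \bR^d$. By \Cref{thm: main}, $x \mapsto v(t, x)$ is uniformly continuous for every $t > 0$. Using $v(t + h, \cdot\,) = T_h(v(t, \cdot\,))$ and the splitting $E^P[|v(t, X_h) - v(t, x)|] \le \omega(\delta) + 2\|\psi\|_\infty C_1 h / \delta^2$, with $\omega$ a modulus of continuity of $v(t, \cdot\,)$, and optimising over $\delta > 0$, one deduces continuity of $v$ in $t$, locally uniformly in $x$; continuity at $t = 0$ follows analogously from the continuity of $\psi$ and the same moment bound. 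Thus $v$ is continuous, and since it is a weak-sense viscosity solution it is a viscosity solution in the sense defined above. I expect this step to be routine, with the only care needed when the time argument is perturbed downwards and near $t = 0$.

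\emph{Step 3: uniqueness under \Cref{cond: Lipschitz continuity}, and the main obstacle.} The strategy is a comparison principle for bounded viscosity sub- and supersolutions of \eqref{eq: PDE}: if $u$ is a bounded viscosity subsolution, $w$ a bounded viscosity supersolution, and $u(0, \cdot\,) \le \psi \le w(0, \cdot\,)$, then $u \le w$. With $a = \sigma \sigma^*$ and $\sigma$, $b$ Lipschitz, the matrix inequality in the Crandall--Ishii lemma gives $\tfrac12 \tr[a(x) M - a(y) N] \le C_2 \alpha \|x - y\|^2$ for admissible $(M, N)$ at scale $\alpha$, while $\sup_f \langle b(f, x), \alpha(x - y) \rangle - \sup_f \langle b(f, y), \alpha(x - y) \rangle \le \C \alpha \|x - y\|^2$ by $|\sup_f A_f - \sup_f B_f| \le \sup_f |A_f - B_f|$ and the Lipschitz bound on $b$; hence $G$ satisfies the standard structural condition of the Crandall--Ishii--Lions theory. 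A doubling-of-variables argument with penalisations $\tfrac{1}{2\varepsilon}(t - s)^2$, $\tfrac{\alpha}{2}\|x - y\|^2$, $\tfrac{\gamma}{T - t}$ and a spatial penalisation handling the non-compactness of $\bR^d$, combined with Ishii's lemma, then forces $u \le w$ on $[0, T] \times \bR^d$ for every $T$, hence everywhere. Applying this to two bounded viscosity solutions — continuous and equal to $\psi$ at $t = 0$ by Steps~1--2 — in both orders shows they coincide, and as $v$ is such a solution it is the unique one. I expect the main obstacle to be the comparison principle, and within it the technical point of reconciling the \emph{weak-sense} formulation (test functions in $C^\infty_b$, contact only at interior times $t > 0$) with the test-function class required by the Crandall--Ishii--Lions machinery — handled by local mollification and truncation of test functions and by arranging, via the penalisations, that the relevant extrema occur at times $t > 0$, as in \cite{CN22, CN22b}.
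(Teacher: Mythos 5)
Your proposal is correct in substance and follows the same three-step architecture as the paper's proof: (i) the weak-sense viscosity solution property of \(v\) via dynamic programming and It\^o's formula, which the paper does not re-prove but delegates to the arguments of \cite{CN22}; (ii) the upgrade to a genuine viscosity solution by establishing joint continuity of \(v\) from the uniform strong \(\usc_b\)--Feller property of \Cref{thm: main}, which the paper handles by citing the verbatim argument of \cite[Theorem 2.36]{CN22b} (your modulus-of-continuity/moment-bound splitting is exactly the content of that step); and (iii) uniqueness via a comparison principle under \Cref{cond: Lipschitz continuity}. The only genuine divergence is in (iii): the paper simply invokes the comparison principle \cite[Corollary~2.34]{hol16} together with \cite[Lemmata~2.4, 2.6, Remark~2.5]{hol16}, whereas you sketch a self-contained Crandall--Ishii--Lions doubling-of-variables proof, correctly identifying the structural condition supplied by the Lipschitz hypotheses on \(b\) and \(\sigma\) and the need to reconcile the weak-sense (global-touching, \(C^\infty_b\)) test-function class with the local formulation used by that machinery --- which is precisely the role of the cited lemmata in \cite{hol16}. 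Your route buys independence from the external reference at the cost of redoing standard but lengthy viscosity-theoretic work; the paper's route is shorter but citation-dependent. Two minor economies you could make: in the supersolution step of (i) a constant control \(f^* \in F\) attaining the supremum at \(x_0\) suffices, so no measurable selection is needed; and existence of a law in \(\cR(x_0)\) with the prescribed drift follows from \cite[Theorem 7.2.1]{SV} plus a Girsanov change of measure, as in \cite{CN22}.
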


\begin{proof}
We already mentioned that \(v\) is a weak sense viscosity solution to \eqref{eq: PDE}. Furthermore, thanks to Theorem \ref{thm: main}, it follows verbatim as in the proof of \cite[Theorem 2.36]{CN22b} that \(v\) is continuous (in both arguments). Finally, the comparison principle \cite[Corollary~2.34]{hol16}, in combination with \cite[Lemmata~2.4, 2.6]{hol16} and \cite[Remark~2.5]{hol16}, implies  uniqueness under Condition~\ref{cond: Lipschitz continuity}.
\end{proof}

\section{Proof of Theorem \ref{thm: main}}
We call an \(\bR^d\)-valued continuous process \(Y = (Y_t)_{t \geq 0}\) a (continuous) \emph{semimartingale after a time \(t^* \in \mathbb{R}_+\)} if the process \(Y_{\cdot + t^*} = (Y_{t + t^*})_{t \geq 0}\) is a semimartingale for its natural right-continuous filtration.
The law of a semimartingale after \(t^*\) is said to be a \emph{semimartingale law after \(t^*\)} and the set of them is denoted by \(\fPs (t^*)\). 
For \(P \in \fPs (t^*)\) we denote the semimartingale characteristics of the shifted coordinate process \(X_{\cdot + t^*}\) by \((B^P_{\cdot + t^*}, C^P_{\cdot + t^*})\), 
and we set
\[
\fPas (t^*) := \big\{ P \in \fPs (t^*) \colon P\text{-a.s. } (B^P_{\cdot + t^*}, C^P_{\cdot + t^*}) \ll \llambda \big\}.
\]
For \((t,x) \in \bR_+ \times \bR^d\), we define 
\begin{align*}
\cK(t,x) := \big\{ P \in \mathfrak{P}_{\text{sem}}^{\text{ac}}(t)\colon P(X_s &= x \text{ for all } s \in [0, t]) = 1, \\
&(\llambda \otimes P)\text{-a.e. } (dB^P_{\cdot + t} /d\llambda, dC^P_{\cdot + t}/d\llambda) \in \Theta (X_{\cdot + t}) \big\}.
\end{align*}
For a probability measure \(P\) on \((\Omega, \mathcal{F})\), a kernel \(\Omega \ni \omega \mapsto Q_\omega \in \mathfrak{P}(\Omega)\), and a finite stopping time \(\tau\), we define the pasting measure
\[
(P \otimes_\tau Q) (A) \triangleq \iint \1_A (\omega \otimes_{\tau(\omega)} \omega') Q_\omega (d \omega') P(d \omega), \quad A \in \cF,
\]
where
\[
\omega \otimes_t \omega' :=  \omega \1_{[ 0, t)} + (\omega (t) + \omega' - \omega' (t))\1_{[t, \infty)}.
\]

\begin{definition}[Time inhomogeneous Markov Family]
	A family \(\{P_{(s, x)} \colon (s, x) \in \bR_+ \times \bR^d\} \subset \mathfrak{P}(\Omega)\) is said to be a \emph{strong Markov family} if \((t, x) \mapsto P_{(t, x)}\) is Borel and the strong Markov property holds, i.e., for every \((s, x) \in \bR_+ \times \bR^d\) and every finite stopping time \(\tau \geq s\), 
	\[
	P_{(s, x)} (\,\cdot\, | \cF_\tau) (\omega) = \delta_\omega \otimes_{\tau (\omega)} P_{(\tau (\omega), \omega (\tau (\omega)))}
	\]
for \(P_{(s, x)}\)-a.a. \(\omega \in \Omega\).
\end{definition}

The following general strong Markov selection principle can be proved as in Section 5 of \cite{CN22b}. We omit a detailed proof. 
\begin{theorem}[Strong Markov Selection Principle]  \label{theo: strong Markov selection}
Suppose that the Conditions~\ref{SA: bounded and elliptic} and \ref{cond: convex} hold.
	For every \(\psi \in \usc_b(\bR; \mathbb{R}^d)\) and every \(t > 0\), there exists a strong Markov family \(\{P_{(s, x)} \colon (s, x) \in \bR_+ \times \mathbb{R}^d\}\) such that, for all \((s, x)\in \bR_+ \times \mathbb{R}^d\), \(P_{(s, x)} \in \cK (s, x)\) and 
	\[
	E^{P_{ (s, x) }} \big[ \phi (X_t) \big] = \sup_{P \in \cK (s, x)} E^P \big[ \phi (X_t) \big].
	\]
	In particular, for all \(x \in \bR^d\), 
	\[
	T_t (\psi) (x) = E^{P_{(0, x)}} \big[ \psi (X_t) \big]. 
	\]
\end{theorem}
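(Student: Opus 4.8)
The plan is to follow the Stroock--Varadhan--Krylov strong Markov selection method, adapted to the present time-inhomogeneous controlled-diffusion setting exactly as in Section~5 of \cite{CN22b}. The whole argument rests on four structural properties of the correspondence \((s,x) \mapsto \cK(s,x)\), followed by an iterative measurable selection that first optimizes the prescribed terminal functional and then a countable separating family of resolvent functionals; the strong Markov property is then read off from the stability of the refinements. Throughout I write \(\phi\) for the fixed bounded upper semicontinuous target (so \(\phi = \psi\) in the statement).

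First I would establish the structural properties of \(\cK\). Non-emptiness of \(\cK(s,x)\) follows as in \cite[Lemma~2.10]{CN22}, the constraint merely freezing the path on \([0,s]\) before launching a diffusion with differential characteristics in \(\Theta\). Relative compactness of each \(\cK(s,x)\) in \(\mathfrak{P}(\Omega)\) comes from the uniform bounds \(\|dB^P_{\cdot + s}/d\llambda\| \le \C\) and \(dC^P_{\cdot + s}/d\llambda \le \C I\) from Condition~\ref{SA: bounded and elliptic}(iii), via the standard tightness criterion for semimartingale laws with uniformly bounded characteristics; closedness uses the continuity of \(b, a\) from Condition~\ref{SA: bounded and elliptic}(ii) together with the convexity Condition~\ref{cond: convex}, which guarantees that the constraint \((dB^P_{\cdot+s}/d\llambda, dC^P_{\cdot+s}/d\llambda) \in \Theta(X_{\cdot+s})\) is preserved under weak limits. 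Borel measurability of the graph of \(\cK\) is obtained as for \(\Theta\) in \cite[Lemma~2.9]{CN22}. The two decisive properties are the concatenation properties: (i) restriction/conditioning, i.e., for \(P \in \cK(s,x)\) and a stopping time \(\tau \ge s\), the regular conditional probabilities \(P(\,\cdot\, | \cF_\tau)(\omega)\) lie in \(\cK(\tau(\omega), X_{\tau(\omega)}(\omega))\) for \(P\)-a.a.\ \(\omega\); and (ii) reconstruction/pasting, i.e., for such \(P\) and any measurable kernel \(\omega \mapsto Q_\omega\) with \(Q_\omega \in \cK(\tau(\omega), X_{\tau(\omega)}(\omega))\), the pasted law \(P \otimes_\tau Q\) again lies in \(\cK(s,x)\). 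Both follow from the behaviour of semimartingale characteristics under conditioning and under the operation \(\otimes_\tau\), precisely as in \cite{CN22b}.

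Next I would run the refinement. Put \(H_1(P) := E^P[\phi(X_t)]\) and define \(\cK^{(1)}(s,x) := \{P \in \cK(s,x) : H_1(P) = \sup_{Q \in \cK(s,x)} H_1(Q)\}\). Since \(\phi\) is bounded and upper semicontinuous, \(\omega \mapsto \phi(X_t(\omega))\) is bounded and upper semicontinuous on \(\Omega\), so by the Portmanteau theorem \(P \mapsto H_1(P)\) is upper semicontinuous for weak convergence; hence the supremum is attained on the compact set \(\cK(s,x)\) and \(\cK^{(1)}(s,x) \ne \emptyset\). The value function \(V(s,x) := \sup_{\cK(s,x)} H_1\) is measurable by measurable projection, and \(\cK^{(1)}\) inherits compactness and a measurable graph. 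Crucially, \(V\) obeys the dynamic programming principle \(V(s,x) = \sup_{P \in \cK(s,x)} E^P[V(\tau, X_\tau)]\) for stopping times \(s \le \tau \le t\) (with \(V(s,x) = \phi(x)\) for \(s \ge t\)), which, combined with (i) and (ii), shows that \(\cK^{(1)}\) is again stable under conditioning and pasting. Then fix a countable family of resolvent functionals \(H_n(P) := E^P[\int_0^\infty e^{-q_n r} g_n(X_r)\,dr]\), \(n \ge 2\), with \(q_n \in \mathbb{Q}_{>0}\) and \(g_n\) ranging over a countable set separating points of \(\mathfrak{P}(\Omega)\), and refine recursively by \(\cK^{(n)}(s,x) := \operatorname{argmax}_{\cK^{(n-1)}(s,x)} H_n\). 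Each \(\cK^{(n)}\) stays non-empty, compact, measurable, and stable under conditioning and pasting, the stability again being a dynamic programming statement for the additive functionals \(H_n\). Letting \(P_{(s,x)}\) be the unique element of \(\bigcap_n \cK^{(n)}(s,x)\)---a singleton because the \(H_n\) separate measures---yields a Borel family with \(P_{(s,x)} \in \cK^{(1)}(s,x) \subset \cK(s,x)\), hence \(E^{P_{(s,x)}}[\phi(X_t)] = \sup_{P \in \cK(s,x)} E^P[\phi(X_t)]\). The strong Markov identity \(P_{(s,x)}(\,\cdot\, | \cF_\tau)(\omega) = \delta_\omega \otimes_{\tau(\omega)} P_{(\tau(\omega), \omega(\tau(\omega)))}\) then follows from this uniqueness together with the conditioning and pasting stability carried through every refinement step. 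Finally, since \(\cK(0,x) = \cR(x)\), evaluating the optimality relation at \(s = 0\) gives \(T_t(\psi)(x) = E^{P_{(0,x)}}[\psi(X_t)]\).

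The main obstacle is the dynamic programming principle underlying the stability of the argmax sets under conditioning and pasting, together with the measurability bookkeeping (upper semianalyticity of the iterated value functions and the existence of measurable selections). The fact that the first target \(\phi(X_t)\) is only upper semicontinuous, rather than continuous, makes this delicate: one only has upper semicontinuity of \(P \mapsto E^P[\phi(X_t)]\), so attainment of the supremum and the dynamic programming principle must be pushed through via the Portmanteau theorem and the compactness of \(\cK\). The reconstruction property (ii) for \(\cK\) itself---governed by the transformation of semimartingale characteristics under the pasting operation \(\otimes_\tau\)---is the other technical heart of the argument.
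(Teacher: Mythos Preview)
Your proposal is correct and follows exactly the route the paper indicates: the paper omits a detailed proof and simply points to Section~5 of \cite{CN22b}, and your outline of the Krylov--Stroock--Varadhan selection scheme (structural properties of \(\cK\), optimisation of \(H_1(P)=E^P[\phi(X_t)]\), iterative refinement via a separating family of resolvent functionals, and reading off the strong Markov property from the stability of the argmax sets under conditioning and pasting) is precisely that argument. One small wording point: in property~(i) the regular conditional probability \(P(\,\cdot\,|\cF_\tau)(\omega)\) is not literally in \(\cK(\tau(\omega),X_{\tau(\omega)}(\omega))\) since it follows \(\omega\) on \([0,\tau(\omega)]\) rather than staying constant; the correct statement, consistent with the paper's strong Markov identity, is that it equals \(\delta_\omega\otimes_{\tau(\omega)}Q\) for some \(Q\in\cK(\tau(\omega),X_{\tau(\omega)}(\omega))\).
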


\begin{definition}[Uniform Strong Feller Family] \label{def: USFF}
A time inhomogeneous strong Markov family \(\{P_{ (s, x)} \colon (s, x) \in \bR_+ \times \bR^d\}\) is said to have the \emph{uniform strong Feller property} if, for every \(t > 0\) and every bounded Borel function \(\phi \colon \bR^d \to \bR\), the map \([0, t - h] \times \bR^d \ni (s, x) \mapsto E^{P_{(s, x)}}[ \phi (X_t) ]\) is uniformly continuous for every \(h \in (0, t)\). 
\end{definition}

The next result is the key observation for the proof of Theorem \ref{thm: main}. 

\begin{theorem} \label{thm: Markov = strong Feller}
Suppose that Condition \ref{SA: bounded and elliptic} holds.
Let \(\{P_{(s, x)} \colon (s, x) \in \bR_+ \times \mathbb{R}^d\}\) be a strong Markov family such that, for all \((s, x)\in \bR_+ \times \mathbb{R}^d\), \(P_{(s, x)} \in \cK (s, x)\). Then, it is also a uniform strong Feller family. 
\end{theorem}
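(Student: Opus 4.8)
The goal is to show that the map $(s,x)\mapsto E^{P_{(s,x)}}[\phi(X_t)]$ is uniformly continuous on $[0,t-h]\times\bR^d$ for every bounded Borel $\phi$ and every $h\in(0,t)$. The strategy is to exploit the ellipticity in Condition~\ref{SA: bounded and elliptic}(iii) and the fact that, under each $P_{(s,x)}\in\cK(s,x)$, the process $X_{\cdot+s}$ has bounded drift and a volatility $a$ that is uniformly elliptic and bounded: it is a law with characteristics driven by $(b(f,X_{\cdot+s}),a(X_{\cdot+s}))$. The law $P_{(s,x)}\circ(X_{s+r})^{-1}$ therefore admits a density with respect to Lebesgue measure for $r>0$ (via, e.g., Krylov-type estimates or the Aronson Gaussian bounds for the associated divergence/nondivergence form operators with bounded measurable coefficients), and the key is to obtain a \emph{quantitative} modulus of continuity for $(s,x)\mapsto P_{(s,x)}\circ(X_t)^{-1}$ in total variation that is \emph{uniform} in $(s,x)\in[0,t-h]\times\bR^d$.

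\medskip

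First I would reduce to continuity in the spatial variable and in the time variable separately, using the strong Markov property to paste: by the flow/Markov structure, $E^{P_{(s,x)}}[\phi(X_t)] = E^{P_{(s,x)}}\big[\, g_{s'}(X_{s'})\,\big]$ where $g_{s'}(\cdot) := E^{P_{(s',\cdot)}}[\phi(X_t)]$ for any intermediate time $s<s'<t$, so it suffices to control the one-step transition over a time increment bounded below by $h/2$, say. For the \emph{spatial} modulus, I would compare $P_{(s,x)}$ and $P_{(s,y)}$ by a coupling / Girsanov argument: up to a time shift both are laws of diffusions with the same elliptic volatility $a$ (started at $x$, resp. $y$) and drifts in the compact, convex set $\{b(f,\cdot)\}$; since $a$ is uniformly elliptic and bounded and $b$ is bounded, one gets an Aronson-type bound $\|P_{(s,x)}\circ X_t^{-1} - P_{(s,y)}\circ X_t^{-1}\|_{TV}\le \omega_h(\|x-y\|)$ with $\omega_h$ a modulus depending only on $\C$, $d$, and $h$ (not on $s$, $x$, $y$, or the particular selection $b(f,\cdot)$, since all the relevant constants are uniform). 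For the \emph{temporal} modulus on $[0,t-h]$, I would similarly use that shifting the starting time by $\delta$ changes the elapsed time from $t-s$ to $t-s-\delta$, both $\ge h$, and the heat-kernel-type bounds give a modulus $\eta_h(\delta)$, again uniform.

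\medskip

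The main obstacle is making the heat-kernel / density estimates genuinely \emph{uniform in the selection}: under $P_{(s,x)}\in\cK(s,x)$ we do not control a single SDE but a whole class of semimartingale laws whose differential characteristics lie in $\Theta$, with possibly time- and path-dependent (measurable) drift. The clean way around this is to invoke that any such $P$ is (after the time shift) a solution of a martingale problem for an operator $\tfrac12\tr[a(x)\nabla^2] + \langle b_t,\nabla\rangle$ with bounded measurable coefficients, uniformly elliptic with constants from Condition~\ref{SA: bounded and elliptic}(iii); by Krylov's estimates and the Aronson bounds for such operators, the time-$r$ marginal has a density $p$ satisfying $p(z)\le C_{d,\C}\,r^{-d/2}\exp(-\|z-x\|^2/(C_{d,\C}\,r))$ and a matching lower bound, with constants depending only on $d,\C$. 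These bounds hold for every representative in $\cK(s,x)$ with the \emph{same} constants, which is exactly what upgrades the pointwise comparison into a uniform modulus. The rest is routine: combine the spatial modulus $\omega_h$ and the temporal modulus $\eta_h$, and note that since $\phi$ is bounded with $\|\phi\|_\infty$ fixed, $|E^{P_{(s,x)}}[\phi(X_t)] - E^{P_{(s',y)}}[\phi(X_t)]| \le \|\phi\|_\infty\big(\omega_h(\|x-y\|) + \eta_h(|s-s'|)\big)$, which is the claimed uniform continuity.
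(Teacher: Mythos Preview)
Your intuition is right---uniform ellipticity plus a bounded drift should deliver the uniform strong Feller property---but the execution has a real gap. The differential drift under $P_{(s,x)}\in\cK(s,x)$ is a \emph{predictable, path-dependent} process $b^{(s,x)}_r(\omega)$ with values in $\{b(f,X_r(\omega)):f\in F\}$; it is not, in general, a measurable function of $(r,X_r)$. Consequently $P_{(s,x)}$ is \emph{not} the law of a diffusion associated with a second-order operator with $(t,x)$-measurable coefficients, and the Aronson and Krylov estimates you invoke (which are statements about fundamental solutions of parabolic PDEs in nondivergence form) do not apply directly. Your sentence ``any such $P$ is a solution of a martingale problem for an operator $\tfrac12\tr[a(x)\nabla^2]+\langle b_t,\nabla\rangle$ with bounded measurable coefficients'' hides exactly this point. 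Moreover, even if one had two-sided Gaussian bounds on the marginal densities uniformly in the selection, passing from there to a \emph{total variation} modulus in the initial point requires additional regularity of the kernel (e.g.\ H\"older continuity in $x$), which is a further nontrivial step you do not address.

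The paper avoids both difficulties by a Girsanov reduction. Since the volatility is certain and equal to $a(X)$, the exponential martingale
\[
Z^{(s,x)}_t=\exp\Big(-\int_s^{t}\langle a^{-1}(X_r)b^{(s,x)}_r,d\overline X^{(s,x)}_r\rangle-\tfrac12\int_s^{t}\langle b^{(s,x)}_r,a^{-1}(X_r)b^{(s,x)}_r\rangle\,dr\Big)
\]
turns $P_{(s,x)}$ into the \emph{driftless} martingale problem $Q_{(s,x)}$ for $(0,a)$, which is well-posed and has the uniform strong Feller property by Stroock--Varadhan (Theorems~7.2.1 and 7.2.4 in \cite{SV}). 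One then writes, using the Markov property at an intermediate time $s+\alpha$,
\[
\big|E^{P_{(t,x)}}[\psi(X_T)]-E^{P_{(s,y)}}[\psi(X_T)]\big|
\le \big|E^{Q_{(t,x)}}[\Psi]-E^{Q_{(s,y)}}[\Psi]\big|
+E^{P_{(t,x)}}\big[|1-Z^{(t,x)}_{s+\alpha}|\big]
+E^{P_{(s,y)}}\big[|1-Z^{(s,y)}_{s+\alpha}|\big],
\]
with $\Psi=\Psi(s+\alpha,X_{s+\alpha})$. The last two terms are bounded by $(e^{\C\alpha}-1)^{1/2}$ plus a term controlled by $|t-s|$, using only $\|b\|\le\C$ and $\|a^{-1}\|\le\C$; choosing $\alpha$ small makes them $<\varepsilon$ uniformly. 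The first term is then handled by the uniform strong Feller property of the \emph{classical} family $\{Q_{(s,x)}\}$, since $\alpha>0$ guarantees a positive time gap. This argument never needs density bounds for $P_{(s,x)}$ itself and is insensitive to the path-dependence of the drift, which is precisely what your Aronson route cannot absorb without substantial extra work.
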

\begin{proof}
We adapt the argument from \cite[Theorem 7.1.9]{SV}. 	Recall from \cite{SV} that a probability measure \(Q\) on \((\Omega, \cF)\) is said to be a \emph{solution to the martingale problem for \((0, a)\) starting from \((s, x) \in \bR_+ \times \bR^d\)} if \(Q( X_t = x \text{ for all } t \in [0, s]) = 1\) and the processes
\[
f (X_t) - \int_s^t  \tfrac{1}{2} \on{tr} \big[ a (X_r) \nabla^2 f (X_r) \big] dr \colon \ t \geq s, \ \ f \in C^\infty_c (\bR^d; \bR), 
\]
are \(Q\)-martingales. Thanks to \cite[Theorem 7.2.1]{SV}, for every \((s, x) \in \bR_+ \times \bR^d\), there exists a unique solution \(Q_{(s, x)}\) to the martingale problem for \((0, a)\) starting from \((s, x)\). 

By definition of the correspondence \(\cK\), we have \(P_{(s, x)} \in \fPas (s)\) and we denote the Lebesgue densities of the \(P_{(s, x)}\)-characteristics of the shifted coordinate process \(X_{\cdot + s}\) by \((b^{(s, x)}_{\cdot + s}, a^{(s, x)}_{\cdot + s})\). 
Notice that \((\llambda \otimes P_{(s, x)})\)-a.e. \(a^{(s, x)}_{\cdot + s} = a (X_{\cdot + s})\) by the definition of \(\Theta\) and \(\cK (s, x)\).
We define 
\[
Z^{(s, x)}_t := \exp \Big( - \int_s^{t \vee s} \langle a^{-1} (X_{r}) b^{(s, x)}_{r} , d \overline{X}^{(s, x)}_r \rangle - \tfrac{1}{2} \int_s^{t \vee s} \langle b^{(s, x)}_{r} , a^{-1} (X_{r}) b^{(s, x)}_{r} \rangle dr \Big), \quad t \in \bR_+,
\]
where
\[
\overline{X}^{(s, x)} := X - \int_s^{\cdot \vee s} b^{(s, x)}_{r} dr.
\]
Thanks to \cite[Lemma 6.4.1]{SV}, each \(Z^{(s, x)}\) is a \(P_{(s, x)}\)-martingale and 
\[
d Q_{(s, x)} = Z^{(s, x)}_T d P_{(s, x)} \text{ on } \cF_T \text{ for all } T \in \bR_+.
\]
Let \(\psi \colon \mathbb{R}^d \to \mathbb{R}\) be a bounded Borel function such that \(|\psi| \leq 1\), fix a finite time horizon \(T > 0\) and set
\[
\Psi (s,x) := E^{P_{(s, x)}} \big[ \psi (X_T) \big], \quad (s,x) \in \bR_+ \times \bR^d.
\]
Further, let \(h \in (0,T)\), pick \(0 \leq \alpha \leq h\) and let \((t,x), (s,y) \in [0,T-h] \times \bR^d\) such that \(t \leq s\).
Using the (strong) Markov property of \(\{P_{(s, x)} \colon (s, x) \in \bR_+ \times \mathbb{R}^d\}\), we obtain that
\begin{equation} \label{eq: 1st eps/3}
\begin{split}
    \big| E^{P_{(t, x)}} \big[ \psi (X_T) \big] &- E^{P_{(s, y)}} \big[ \psi (X_T) \big] \big| 
    \\&= \big| E^{P_{(t, x)}} \big[ \Psi (s+\alpha, X_{s+\alpha}) \big] - E^{P_{(s, y)}} \big[ \Psi (s + \alpha, X_{s+\alpha}) \big] \big|
    \\&\leq \big| E^{P_{(t, x)}} \big[ Z^{(t, x)}_{s+\alpha} \Psi (s+\alpha, X_{s+\alpha}) \big] - E^{P_{(s, y)}} \big[ Z^{(s, y)}_{s+\alpha} \Psi (s+\alpha, X_{s+\alpha}) \big] \big| 
    \\&\qquad + E^{P_{(t, x)}} \big[ |1 - Z^{(t, x)}_{s+\alpha}| \big] +  E^{P_{(s, y)}} \big[ |1 - Z^{(s, y)}_{s+\alpha}| \big]
    \\&= \big| E^{Q_{(t, x)}} \big[ \Psi (s+\alpha, X_{s+\alpha}) \big] - E^{Q_{(s, y)}} \big[ \Psi (s+\alpha, X_{s+\alpha}) \big] \big| 
    \\&\qquad + E^{P_{(t, x)}} \big[ |1 - Z^{(t, x)}_{s+\alpha}| \big] + E^{P_{(s, y)}} \big[ |1 - Z^{(s, y)}_{s+\alpha}| \big].
\end{split}
\end{equation}
Notice that
\begin{align*}
    \big( E^{P_{(t, x)}} \big[ |1 - Z^{(t, x)}_{s+\alpha}| \big] \big)^2 
    &\leq E^{P_{(t, x)}} \big[ \big|1 - Z^{(t, x)}_{s+\alpha}\big|^2 \big]
    \\&= E^{P_{(t, x)}} \big[ \big(Z^{(t, x)}_{s+\alpha} \big)^2 \big] - 1 
    \\&\leq e^{\C (s+\alpha - t)} - 1
    \\& = e^{\C\alpha} - 1 + e^{\C\alpha}\big(e^{\C (s - t)} - 1\big),
\end{align*}
where the constant \(\C > 0\) only depends on the constant from part (iii) of Condition~\ref{SA: bounded and elliptic}.
Similarly, we have
\[
\big( E^{P_{(s, y)}} \big[ |1 - Z^{(s, y)}_{s+\alpha}| \big] \big)^2 \leq e^{\C \alpha } - 1.
\]
Now, let \(\varepsilon > 0\) be arbitrary. Choose \(\alpha_\epsilon \in (0, h)\) with
\[
e^{\C \alpha_\epsilon } - 1 < \varepsilon^2.
\]
Next, choose \(\beta_\epsilon > 0\) with
\[
e^{\C \alpha_\epsilon} \big( e^{\C \beta_\epsilon}  - 1 \big) <  \varepsilon^2.
\]
This yields that
\begin{equation} \label{eq: part 1}
    E^{P_{(t, x)}} \big[ |1 - Z^{(t, x)}_{s+\alpha}| \big] + E^{P_{(s, y)}} \big[ |1 - Z^{(s, y)}_{s+\alpha}| \big] \leq 2 \epsilon + \epsilon = 3 \epsilon.
\end{equation}
Due to \cite[Theorem 7.2.4]{SV}, there exists an \(0 < \delta \leq \beta_\epsilon\) that does not depend on \((t,x), (s,y)\)
such that 
\begin{align} \label{eq: part 2}
\big| E^{Q_{(t, x)}} \big[ \Psi (s+\alpha_\epsilon, X_{s+\alpha_\epsilon}) \big] - E^{Q_{(s, y)}} \big[ \Psi (s+\alpha_\epsilon, X_{s+\alpha_\epsilon}) \big] \big| \leq \epsilon
\end{align}
whenever \(|t-s| + \| x - y \| \leq \delta\).
Here, we used that \( \alpha_\epsilon > 0\).
Thanks to \eqref{eq: part 1} and \eqref{eq: part 2},  we conclude that
\[
\big| E^{P_{(t, x)}} \big[ \psi (X_T) \big] - E^{P_{(s, y)}} \big[ \psi (X_T) \big] \big| \leq 4\varepsilon,
\]
in case \(|t-s| + \| x - y \| \leq \delta\).
This proves the uniform strong Feller property of the family \(\{P_{(s, x)} \colon (s, x) \in \bR_+ \times \mathbb{R}^d\}\) and therefore, the proof is complete.
\end{proof}

Finally, we are in the position to prove our main result, Theorem \ref{thm: main}.

\begin{proof} [Proof of Theorem \ref{thm: main}]
Let \(\psi \colon \mathbb{R}^d \to \mathbb{R}\) be bounded and upper semicontinuous and take \(t > 0\). By the Theorems \ref{theo: strong Markov selection} and \ref{thm: Markov = strong Feller}, there exists a uniform strong Feller family \(\{P_{(s, x)} \colon (s, x) \in \bR_+ \times \mathbb{R}^d\}\) such that 
\[
T_t (\psi) (x) = E^{P_{(0, x)}} \big[ \psi (X_t) \big], \quad x \in \bR^d.
\]
The uniform strong Feller property yields the uniform continuity of \(x \mapsto T_t (\psi)(x)\). This completes the proof.
\end{proof}


\end{document}